\theoremstyle{plain} 
\newtheorem{theorem}{Теорема}
\newtheorem{lemma}{Лемма}[section]
\newtheorem{proposition}{Предложение}[section] 
\newtheorem{corollary}{Следствие}[section] 
\theoremstyle{definition}
\newtheorem{definition}{Определение}[section] 
\newtheorem{remark}{Замечание}[section]
\renewcommand{\leq}{\leqslant} 
\renewcommand{\geq}{\geqslant}
\newcommand{\rad}{\text{\tiny\rm rad}}
\newcommand{\RR}{\mathbb{R}} 
\newcommand{\CC}{\mathbb{C}}
\newcommand{\e}{\varepsilon}
\DeclareMathOperator{\mes}{mes}
\DeclareMathOperator{\clos}{clos}
\DeclareMathOperator{\dens}{dens} 
\DeclareMathOperator{\Meas}{Meas} 
\DeclareMathOperator{\Zero}{Zero} 
\DeclareMathOperator{\sbh}{sbh}
\DeclareMathOperator{\supp}{supp} 
\DeclareMathOperator{\type}{type}
\DeclareMathOperator{\rh}{\text{\rm \tiny rh}}
 \DeclareMathOperator{\lh}{\text{\rm \tiny lh}}
\DeclareMathOperator{\strip}{str} 
\DeclareMathOperator{\loc}{loc}
\DeclareMathOperator{\dd}{\,{\mathrm d\!}}
\renewcommand{\Re}{{\rm Re \,}}
\renewcommand{\Im}{{\rm Im \,}}
\begin{document}

\title{Рост субгармонических функций вдоль мнимой оси}

\author{А.\,Е.~Егорова, Б.\,Н.~Хабибуллин}

\selectlanguage{russian}
\maketitle

\section{Введение}\label{s10}

\subsection{Основная задача. Истоки}\label{subs1_1}  Пусть  $u\not\equiv-\infty$ и $M\not\equiv -\infty$ --- субгармонические 
 на \textit{комплексной плоскости\/} $\CC$ функции \textit{конечного типа\/} (при порядке $1$), что означает конечность как типа 
 \begin{equation}\label{typev}
\type[u]:=\limsup_{z\to \infty}\frac{u(z)}{|z|},
\end{equation}
так и типа $\type[M]<+\infty$,  с мерами Рисса соответственно $\nu_u:=\frac{1}{2\pi}\bigtriangleup\! u$ и 
$\mu_M:=\frac{1}{2\pi}\bigtriangleup\! M$, где  $\bigtriangleup$ --- \textit{оператор Лапласа,\/} действующий в смысле теории обобщённых функция
\cite{HK}, \cite{Rans}. 
Предположим, что рост функции $u$ вдоль некоторой прямой $L\subset \CC$
мажорируется функцией $M$ или, более общ\'о, некоторыми усреднениями функции $M$ по окружностям с центрами на $L$, к тому же с определёнными аддитивными добавками к $M$, а также не всюду на $L$, а вне некоторого исключительного  множества $E\subset L$. В таком случае естественно ожидать, что мера Рисса $\nu_u$ тоже должна в каком-то смысле мажорироваться мерой Рисса $\mu_M$
в увязке с радиусами усредняющих окружностей, с характеристиками аддитивных добавок и степенью малости исключительного множества $E$. 
Наша \textit{основная задача\/} --- дать количественные характеристики такого мажорирования меры $\nu_u$ мерой $\mu_M$ в терминах специальных <<логарифмических>>  характеристик/плотностей распределения мер $\nu_u$ и $\mu_M$. 
Полученная  в этом направлении  теорема \ref{prth1}, сформулированная 
ниже в подразделе \ref{main}, а также её вариации (предложение \ref{prQN},
следствие \ref{corJ}, теорема \ref{th2} из раздела \ref{thvar})
 --- результаты новые и  при специальном виде $u=\ln|f|$ и $M=\ln |g|$ в случае  \textit{целых функций экспоненциального типа\/} (пишем \textit{ц.ф.э.т.\/})  $f\neq 0$ и $g\neq 0$ с 
$\type\bigl[\ln |f|\bigr]<+\infty$ и $\type\bigl[\ln |g|\bigr]<+\infty$, когда в роли мер Рисса $\nu_u$ и $\mu_M$ выступают \textit{последовательности нулей,\/} или корней, $\Zero_f$
и  
$\Zero_g$  соответственно функций $f$ и $g$, перенумерованные каким-либо образом с учётом кратности. Как раз для ц.ф.э.т. устанавливается заключительная теорема единственности  \ref{th3}. 

В  постановке именно для ц.ф.э.т.  $f$ и $g$ версия нашей основной задачи  рассматривалась  в совместной работе П.~Мальявена и Л.\,А.~Рубела  \cite{MR}, в которой в качестве прямой $L$ выбиралась  {\it мнимая ось\/} $i\RR\subset \CC$, где $\RR\subset \CC$ --- {\it вещественная ось.\/} Такого выбора придерживаемся и мы. 
В \cite{MR}  для произвольной  ц.ф.э.т. $g$ с нулями
в \textit{правой полуплоскости\/} $\CC_{\rh}:=\{z\in \CC \colon \Re z>0\}$ 
исключительно на {\it положительной полуоси\/} $\RR^+:=\{x\in \RR\colon x\geq 0\}$ было дано законченное описание всех  \textit{положительных\/} последовательностей точек  
${\sf Z}=\{{\sf z}_k\}_{k=1,2,\dots}
\subset \RR^+$, для каждой из  которых найдётся  своя  ц.ф.э.т. $f\neq 0$, обращающаяся в нуль на ${\sf Z}$  и удовлетворяющая ограничению $\ln |f(iy)|\leq \ln |g(iy)|$ при всех $y\in \RR$. 
Этой задаче посвящен один из основных разделов  монографии Л.\,А.Рубела в сотрудничестве с Дж.\,Э.~Коллиандром \cite[раздел 22]{RC} 1996 г.
В серии работ второго из авторов  
1988--91 гг. 
все эти результаты были перенесены на произвольные 
{\it комплексные\/}  последовательности ${\sf Z} \subset \CC$
с ограничением сверху вида $\ln |f(iy)|\leq M(iy)$ для всех  $y\in \RR$ через  
 \textit{специальную субгармоническую функцию-мажоранту\/}  $M$ вместо $\ln |g|$, а именно: со сколь угодно малым числом $\varepsilon >0$ 
для $M(z)=\e |z|$, $z\in \CC$, изначально в статье И. Ф. Красичкова-Терновского \cite[теоремы 8.3, 8.5, следствие 5.6]{Kra72} только для последовательностей $\sf Z$
вблизи $i\RR$, для любых $\sf Z\subset \CC$ в \cite[основная теорема]{Kha88} с  
дополнением  в \cite[основная теорема, теорема 1]{Kha01l}, а также в  гораздо более общей форме  с мажорантой вида $M(z)=\ln |g(z)|+ \e|z|$, где $g\neq 0$  --- ц.ф.э.т.,  в   \cite[теорема 1]{KhaD88} и в \cite[основная теорема]{Kha89}, или ещё более общ\'о и жёстко, --- с мажорантами вида $M=\ln |g|$ уже  с $\e=0$,  но для последовательностей ${\sf Z}\subset \CC$, отделенных какой-нибудь парой вертикальных углов от мнимой оси, --- в \cite[основная теорема]{kh91AA}.  Ситуация с    субгармонической функцией-мажорантой $M$ конечного типа при порядке $1$, гармонической  в паре вертикальных углов, содержащих $i\RR\setminus \{0\}$, достаточно детально  исследована  в диссертации второго из соавторов  \cite[гл.~II]{KhDD92}, но в научных журналах  последние  результаты с произвольной субгармонической мажорантой $M$ конечного типа не публиковались. 
Большинство отмеченных выше результатов изложено   в монографии-обзоре  \cite[3.2]{Khsur} с подробными комментариями. 

\subsection{Обозначения и определения}\label{subs1_2}
В данном подразделе приводится  всё, что использовано  для формулировки основной в статье теоремы \ref{prth1}. Так, $\sbh$ ---  множество всех \textit{субгармонические функции на\/} $\CC$, $\sbh_*:=\{u\in \sbh \colon u\not\equiv -\infty\}$.

Одним и тем же символом $0$ обозначаем, по контексту, число нуль, нулевую функцию, нулевую меру и т.\,п.; $\varnothing$ --- {\it пустое множество.\/} 
{\it Положительность\/} всюду понимается 
как $\geq 0$, а \textit{отрицательность} --- это $\leq 0$.  

$\Meas^+$ --- класс всех \textit{положительных борелевских мер на\/} $\CC$, 
$\mes$ --- \textit{линейная мера Лебега} на $\RR$.
$C(X)$ --- класс \textit{всех непрерывных функций\/} $f\colon X\to \RR$. 
$L_{\loc}^1(X)$  для  \textit{$\mes$-измеримого подмножества\/} $X\subset \RR$ --- класс всех \textit{локально $\mes$-ин\-т\-е\-г\-р\-и\-р\-у\-е\-м\-ых функций\/} со значениями в  $\RR_{\pm \infty}:=\{-\infty\}\cup \RR\cup \{+\infty\}$, где \textit{расширенная вещественная ось\/} $\RR_{\pm\infty}$ наделяется естественным порядком $-\infty\leq x \leq +\infty$, $x\in \RR_{\pm\infty}$, и порядковой топологией, или топологией конечной копактификации $\RR$ с двумя концами $\pm\infty$. Аналогично определяется $L_{\loc}^1(Y)$  для   $Y\subset i\RR$.

Пусть $X\subset \RR_{\pm\infty}$. Функция $f\colon X\to \RR_{\pm\infty}$ {\it возрастающая}, если для $x_1,x_2\in X$ из $x_1\leq x_2$ следует $f(x_1)\leq f(x_2)$; $f$ \textit{убывающая,\/} если $-f$ возрастающая. 

\textit{Интервал} --- связное подмножество в $\RR_{\pm \infty}$.  \textit{Интегралы Стилтьеса\/} по ограниченному в $\RR$ интервалу $I$ с концами $a:=\inf I<\sup I=:b$ \textit{по функциям ограниченной вариации\/} $m\colon \RR\to \RR$ на этом интервале $I$ обычно, если не оговорено противное, понимаем как 
 \begin{equation}\label{LS}
\int_a^b\dots \dd m :=\int_{(a,b]}\dots \dd m , \quad I=(a,b], \quad -\infty< a<b<+\infty. 
\end{equation}
$D(z,r):=\{z' \in \CC \colon |z'-z|<r\}$ --- {\it открытый,\/}  
$\overline{D}(z,r):=\{z' \in \CC \colon |z'-z|\leq r\}$ --- {\it замкнутый круги,\/} 
$\partial \overline{D}(z,r):=\overline{D}(z,r)\setminus {D}(z,r)$ --- {\it окружность с центром\/ $z\in \CC$ радиуса\/ $r\in \RR^+$}; $D(r):=D(0,r)$, 
 $\overline{D}(r):=\overline{D}(0,r)$, $\partial \overline{D}(r):=\partial \overline{D}(0,r)$.
Определим \textit{интегральные средние по  окружности $\partial \overline D(z, r)$ } от 
  $ v\colon \partial  \overline D(z,r)\to \RR_{\pm\infty}$: 
\begin{subequations}\label{df:MCB}
\begin{align}
\mathsf{C}_v(z, r)&:=:C(z, r;v):=\frac{1}{2\pi} \int_{0}^{2\pi}  v(z+re^{i\theta}) \dd \theta, \;\mathsf{C}_v(r):=\mathsf{C}_v(0, r),\tag{\ref{df:MCB}C}\label{df:MCBc}\\
\intertext{\textit{по кругу\/}$D(z,r)$ от фукции $v\colon D(z,r)\to \RR_{\pm \infty}$:}
\mathsf{B}_v(z,r)&:=:B(z,r;v):=\frac{2}{r^2}\int_{0}^{r}\mathsf{C}_v(z, t)t\dd t ,
\quad \mathsf{B}_v(r):=\mathsf{B}_v(0, r), \tag{\ref{df:MCB}B}\label{df:MCBb}\\
\intertext{а также верхнюю грань функции $v\colon  \partial \overline D(z,r)\to \RR_{\pm \infty}$ на окружности $\partial \overline D(z,r)$:}
 \mathsf{M}_v(z,r)&:=:M(z,r;v):=\sup_{z'\in \partial \overline D(z,r)}v(z'), 
\quad \mathsf{M}_v(r):=\mathsf{M}_v(0, r),  
\tag{\ref{df:MCB}M}\label{df:MCBm}
\end{align}
\end{subequations}
что  при $v\in \sbh 
$ совпадает с $\sup_{\overline D(z, r)}v$. Конечно же, в  \eqref{df:MCB} для  \eqref{df:MCBc} и \eqref{df:MCBb} подразумевается существование интегралов, что всегда имеет место для функций $v\in \sbh_*$ \cite[определение 2.6.7, теорема 2.6.8]{Rans}, \cite[2.7]{HK}, для которых 
\begin{equation}\label{BCM}
\mathsf{B}_v(z,r)\leq \mathsf{C}_v(z,r)\leq \mathsf{M}_v(z,r) \quad\text{при любых 
$z\in \CC$ и $r\in \RR^+$.}
\end{equation} 
   
Во всех упомянутых выше  в подразделе \ref{subs1_1} результатах из \cite{MR}--\cite{Khsur}  ключевую роль играли два объекта. Во-первых, это специальные интегралы по интервалам на вещественной или мнимой оси.
\begin{definition}\label{defJ} Для функции $v\in L^1_{\loc}(\RR)$ полагаем 
\begin{subequations}\label{J}
\begin{align}
J_{\RR}(r,R;v)&:=
\frac{1}{2\pi}\int_r^{R} \frac{v(x)+v(-x)}{x^2} \dd x, \quad 0<r<R<+\infty,
\tag{\ref{J}r}\label{fK}\\
\intertext{а для функции $v\in L^1_{\loc} (i\RR)$ ---}
J_{i\RR}(r,R;v)&:=\frac{1}{2\pi}
\int_r^{R} \frac{v(-iy)+v(iy)}{y^2} \dd y, \quad 0<r<R<+\infty.
\tag{\ref{J}i}\label{fK:abp+}
\end{align}
\end{subequations}
\end{definition}

Второй объект --- это {\it логарифмические меры  интервалов для последовательностей  точек\/} ${\sf Z}=\{{\sf z}_k\}_{k=1,2,\dots}
\subset \CC$. Мы отождествляем каждую последовательность  $\sf Z$
со {\it считающей мерой\/} $n_{\sf Z}\in \Meas^+$, определяемой по правилу 
\begin{equation}\label{nZ}
n_{\sf Z}(S):=\sum_{{\sf z}_k\in S} 1\quad \text{для всех $S\subset \CC$,}
\end{equation}
и определим эти логарифмические меры сразу для произвольных мер из $\Meas^+$.
\begin{definition}\label{logD}
Для меры $\mu\in \Meas^+$
\begin{subequations}\label{df:lm}
\begin{align}
l_{\mu}^{\rh}(r, R)&:=\int_{\substack{r < | z|\leq R\\ \Re z >0}} \Re \frac{1}{ z} \dd \mu(z), \quad 0< r < R <+\infty ,
\tag{\ref{df:lm}r}\label{df:dDlm+}\\
l_{\mu}^{\lh}(r, R)&:=\int_{\substack{r< |z|\leq R\\ 
\Re z<0}}\Re \Bigl(-\frac{1}{ z}\Bigr) \dd \mu(z) ,  \quad 0< r < R < +\infty ,
\tag{\ref{df:lm}l}\label{df:dDlm-}\\
\intertext{---  {\it правая\/} и {\it левая логарифмические меры интервалов\/} $(r,R]\subset \RR^+$ для меры $\mu$ соответственно. Они 
порождают {\it логарифмическую субмеру интервалов}}
l_{\mu}(r, R)&:=\max \bigl\{ l_{\mu}^{\lh}(r, R), l_{\mu}^{\rh}(r,R)\bigr\}, \quad 0< r < R <+\infty. 
\tag{\ref{df:lm}m}\label{df:dDlLm}
\end{align}
\end{subequations}
\end{definition}

\subsection{Основной результат}\label{main}

\begin{theorem}\label{prth1}
Пусть для функций 
\begin{subequations}\label{uM}
\begin{align}
M&\in \sbh_*,\quad \type[M]\overset{\eqref{typev}}{<}+\infty,
\quad \mu:=\mu_M:=\frac{1}{2\pi}\bigtriangleup\! M\in \Meas^+,
\tag{\ref{uM}M}\label{{uM}M}\\
u&\in \sbh_*,\quad \type[u]<+\infty, \quad \nu:=\nu_u:=\frac{1}{2\pi}\bigtriangleup\! u \in \Meas^+,
\tag{\ref{uM}u}\label{{uM}u}\\
q_0&\colon \RR\to \RR^+\cup \{+\infty\},  \quad q_0\in L^1_{\loc}(\RR),
\tag{\ref{uM}$_0$}\label{{uM}0}
\\
 q&\colon \RR\to \RR^+, \quad q\in C(\RR), \quad
\limsup_{|y|\to +\infty}\frac{q(y)}{|y|}<1,
\tag{\ref{uM}q}\label{{uM}q}
\end{align}
\end{subequations}
и некоторого  $\mes$-измеримого подмножества $E\subset \RR^+$ с  
 \begin{subequations}\label{u<M}
\begin{align}
E^r:=E\cap [0,r],&\quad q_E(r):=\mes (E^r)\ln \frac{er}{\mes (E^r)}=:q_E(-r), 
\tag{\ref{u<M}E}\label{{u<M}E}\\
\intertext{имеют место неравенства}
u(iy)+u(-iy)&\leq \mathsf{C}_M\bigl(iy, q(y)\bigr)
+\mathsf{C}_M\bigl(-iy, q(-y)\bigr)
\tag{\ref{u<M}C}\label{{u<M}u}
\\
&+q_0(y)+q_0(-y)\quad\text{для  каждого числа  $y\in \RR^+\setminus E$}.
\notag
\end{align}
\end{subequations}
Тогда для любых  чисел $r_0>0$ и $N\in \RR^+$ найдётся число $C\in \RR^+$, 
для которого
\begin{equation}\label{rRuM}
\begin{split}
\max\bigl\{l_{\nu}(r,R), J_{i\RR}(r,R;u)\bigr\}\leq 
\min\bigl\{l_{\mu}^{\rh}(r,R), l_{\mu}^{\lh}(r,R), J_{i\RR}(r,R;M)\bigr\}
\\+CJ_{\RR} (r,R;q_0+q_E)+C I_N(r,R;q)+C
\quad\text{при всех $r_0\leq r<R<+\infty$,}
\end{split}
\end{equation}
где 
\begin{equation}\label{IN}
I_N(r,R;q):=\int_r^R t^N\sup_{s\geq t} \frac{q(s)+q(-s)}{s^{2+N}}\dd t.
\end{equation}
\end{theorem}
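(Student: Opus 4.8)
The plan is to push the pointwise hypothesis \eqref{{u<M}u} on the imaginary axis into the integral inequality \eqref{rRuM} by working throughout with the axis averages $J_{i\RR}$, and to bridge these with the logarithmic-interval functionals $l_\nu,l_\mu^{\rh},l_\mu^{\lh}$ by means of Carleman's formula for subharmonic functions in the right and left half-discs $\{|z|\le R,\ \pm\Re z\ge 0\}$. The finite-type assumptions \eqref{{uM}M}, \eqref{{uM}u} are used twice: they furnish the uniform bounds $u(z),M(z)\le \const(1+|z|)$ and the linear Riesz-mass estimates $\nu\bigl(\overline D(0,\rho)\bigr),\mu\bigl(\overline D(0,\rho)\bigr)\le \const\,\rho$, and they force every arc contribution at the radii $r$ and $R$ to be of size $O(1)$, uniformly for $r\ge r_0$; this is exactly the additive constant $C$ in \eqref{rRuM}.

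First I would record the two Carleman comparisons. Applied to $u$ in each half-disc, the formula expresses $l_\nu^{\rh}(r,R)$ (resp. $l_\nu^{\lh}(r,R)$) as the axis integral $J_{i\RR}(r,R;u)$ plus arc terms at $r$ and $R$; the arcs are bounded \emph{above} by finite type, so $l_\nu^{\rh},l_\nu^{\lh}\le J_{i\RR}(r,R;u)+C$, whence $\max\{l_\nu,J_{i\RR}(u)\}\le J_{i\RR}(r,R;u)+C$ and it suffices to estimate $J_{i\RR}(r,R;u)$ from above. Applied to $M$, and read in the opposite direction, the same formula gives $J_{i\RR}(r,R;M)\le l_\mu^{\rh}(r,R)+C$ and $J_{i\RR}(r,R;M)\le l_\mu^{\lh}(r,R)+C$: the imaginary axis is the common diameter of the two half-discs, so its integral is compared once with the right-half mass and once with the left-half mass, and here the arcs are bounded \emph{below} (using that the average of $M$ on a circle is bounded below while its positive part grows at most linearly). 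Consequently, once I prove
$$J_{i\RR}(r,R;u)\le J_{i\RR}(r,R;M)+C\,J_{\RR}(r,R;q_0+q_E)+C\,I_N(r,R;q)+C,$$
combining it with the two $M$-comparisons yields $\max\{l_\nu,J_{i\RR}(u)\}\le\min\{l_\mu^{\rh},l_\mu^{\lh},J_{i\RR}(M)\}+\text{(errors)}$, which is \eqref{rRuM}.

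To estimate $J_{i\RR}(r,R;u)$ I would split $(r,R]$ into $E$ and its complement. On $(r,R]\setminus E$ the hypothesis \eqref{{u<M}u} holds pointwise; integrating it against $\tfrac1{2\pi y^2}\dd y$ and enlarging the domain of the nonnegative $q_0$-terms back to all of $(r,R]$ produces the contribution $C\,J_{\RR}(r,R;q_0)$ together with the circular-mean integral $\tfrac1{2\pi}\int_r^R\bigl(\mathsf C_M(iy,q(y))+\mathsf C_M(-iy,q(-y))\bigr)y^{-2}\dd y$. On $E$ I discard the hypothesis and use only the finite-type bound $u(iy)+u(-iy)\le\const|y|$, so that the exceptional part is at most $\const\int_{E\cap(r,R]}y^{-1}\dd y$; the standard entropy estimate for sets of finite logarithmic measure majorizes this by $C\,J_{\RR}(r,R;q_E)+C$, which is precisely where the term $q_E(r)=\mes(E^r)\ln\frac{er}{\mes(E^r)}$ comes from.

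It remains to compare the circular-mean integral with $J_{i\RR}(r,R;M)$, and this is the technical heart. I would invoke the Jensen--Riesz identity $\mathsf C_M(iy,q(y))=M(iy)+\int_0^{q(y)}\mu\bigl(\overline D(iy,t)\bigr)\tfrac{\dd t}{t}$, so that the $M(iy)$-terms reassemble $J_{i\RR}(r,R;M)$ while there remains the nonnegative correction $\tfrac1{2\pi}\int_r^R y^{-2}\!\int_0^{q(y)}\mu\bigl(\overline D(iy,t)\bigr)\tfrac{\dd t}{t}\dd y$ and its $-iy$ analogue. Here the crude bound $\mathsf C_M\le\mathsf M_M$ from \eqref{BCM} is too lossy, so I cannot avoid a Fubini argument: condition \eqref{{uM}q} keeps the discs $\overline D(\pm iy,q(y))$ away from a neighbourhood of the origin and inside a fixed sector, so that interchanging the order of integration turns the correction into $\int K(w)\,\dd\mu(w)$ with kernel $K(w)=\int_r^R y^{-2}\ln^+\!\frac{q(y)}{|w\mp iy|}\dd y$, which is supported in a thin region about $i\RR$ and bounded there by a multiple of $q(\Im w)/(\Im w)^2$. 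Estimating the near-axis $\mu$-mass by the linear finite-type bound converts $\int K\,\dd\mu$ into a one-dimensional integral of $q(t)/t^2$-type, majorized by $C\,I_N(r,R;q)+C$; the monotone majorant $\sup_{s\ge t}$ and the factor $t^N$ in \eqref{IN} are exactly what is needed to absorb the sectorial spreading of the discs and to retain convergence at the origin (any $N\in\RR^+$ works, with $C$ adjusted). Carrying out this conversion rigorously — in particular the uniform control of the near-axis Riesz mass of $\mu$ and the passage to $I_N$ — is the main obstacle; the rest is assembly of the six resulting inequalities via the outer $\max$ and $\min$.
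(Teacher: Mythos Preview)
Your overall architecture is right: the Carleman comparisons (the paper's Lemma~\ref{lemJl}) give $|J_{i\RR}(r,R;v)-l_{\nu_v}^{\rh}(r,R)|\le c_v$ for any $v\in\sbh_*$ of finite type, and these do reduce \eqref{rRuM} to a single axis inequality between $J_{i\RR}(r,R;u)$ and $J_{i\RR}(r,R;M)$ plus controlled error. The Jensen identity and the Fubini kernel $K(w)=\int_r^R y^{-2}\ln^+\!\frac{q(y)}{|w-iy|}\,\mathrm dy$ are also valid, and your heuristic $K(w)\lesssim Q(\Im w)/(\Im w)^2$ supported near $i\RR$ followed by integration against $\mathrm d\mu$ with the linear bound $\mu^{\rad}(t)\le Ct$ is exactly the content of the paper's Lemmas~\ref{lemD}--\ref{lemm}.

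There is, however, a genuine gap in the exceptional-set step. When you integrate \eqref{{u<M}u} over $(r,R]\setminus E$ you obtain $\int_{(r,R]\setminus E}\tfrac{\mathsf C_M+\mathsf C_M}{y^2}\,\mathrm dy$, and you silently replace this by the full integral $\int_r^R$. That replacement requires $\int_{E\cap(r,R]}\tfrac{\mathsf C_M(iy,q(y))+\mathsf C_M(-iy,q(-y))}{y^2}\,\mathrm dy$ to be bounded below, equivalently (after Jensen) $\int_{E\cap(r,R]}\tfrac{M(iy)+M(-iy)}{y^2}\,\mathrm dy\ge -C\,J_{\RR}(r,R;q_E)-C$. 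The finite-type inequality $M(iy)\le\const|y|$ that you use for $u$ controls only the positive part; the negative part $M^-(iy)$ can be arbitrarily large pointwise, so the trivial bound $\int_{E}y^{-1}\,\mathrm dy$ is unavailable here. What is needed is the full two-sided estimate $\int_{E_r^R}\tfrac{|v|(x)}{x^2}\,\mathrm dx\le c_v\,J_{\RR}(r,R;q_E)+c_v$ for $v\in\sbh_*$ of finite type (the paper's Lemma~\ref{lE}, drawn from \cite[Lemma~8]{GrM}); it must be invoked for $M$ (or for $M^D$) and not only for $u$. Once you plug that in, your argument closes.

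On the technical route, the paper proceeds differently: rather than splitting $\mathsf C_M$ by Jensen and running Fubini on the $\mu$-correction, it replaces $M$ by the harmonic extension $M^{D}$ across the tube $D_q=\{|\Re z|<q(\Im z)\}$. Then $\mathsf C_M(iy,q(y))\le M^{D}(iy)$ holds pointwise, $M^{D}$ is again in $\sbh_*$ of finite type (this uses the angular condition in \eqref{{uM}q}), and its Riesz mass is $\mu|_{\CC\setminus\clos D}+\mu_0$ with $\supp\mu_0\subset\partial D_q$, so $l_{\mu_0}^{\rh}$ is directly controlled by $\int Q(y)y^{-2}\,\mathrm d\mu_0^i(y)$ and then by $I_N(r,R;q)$. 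Your Jensen--Fubini calculation and the paper's balayage onto $\partial D_q$ produce the same one-dimensional Stieltjes integral and the same passage to $I_N$; the harmonic-extension route is cleaner in that it turns the intermediate object into an honest subharmonic majorant to which both Lemma~\ref{lE} and Lemma~\ref{lemJl} apply verbatim, while your route is more elementary but requires keeping careful track of the sign of $M$ on $E$ as noted above.
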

\begin{remark}\label{remr}  Для функций из \eqref{{uM}M} и \eqref{{uM}u} их меры Рисса $\mu$ и $\nu$ \textit{конечного типа,} или \textit{конечной верхней плотности,} при порядке $1$, что означает 
\begin{equation}\label{murad}
\type[\mu]:=\limsup_{r\to +\infty} \frac{\mu^{\rad}(r)}{r}<+\infty ,
\quad \mu^{\rad}(r):=\mu(D(0,r)), \quad \type[\nu]<+\infty. 
\end{equation}
\end{remark}
\begin{remark}
В силу неравенств \eqref{BCM} в правой части условия-неравенства \eqref{{u<M}u} 
средние по окружностям ${\sf C}_M$ из \eqref{df:MCBc} можно заменить на средние по кругам ${\sf B}_M$ из \eqref{df:MCBb}, но, вообще говоря,  нельзя заменить 
на верхние грани по окружностям или кругам ${\sf M}_M$ из \eqref{df:MCBm}.
\end{remark}

\section{Доказательство теоремы \ref{prth1}}
\setcounter{equation}{0} 
При доказательстве  теоремы \ref{prth1} можно рассматривать любое, но фиксированное значение $r_0>0$, поскольку  по определению\/ {\rm \ref{logD}} логарифмических мер интервалов \eqref{df:dDlm+}, \eqref{df:dDlm-}, \eqref{df:dDlLm} и определению \ref{defJ}
 интегралов\/  \eqref{fK:abp+}, \eqref{fK}  при изменении $r_0>0$ 
в  заключении \eqref{rRuM} теоремы \ref{prth1}
может разве что увеличиться постоянная $C\in \RR^+$. Поэтому в доказательстве теоремы\/ {\rm \ref{prth1}} мы при необходимости увеличиваем значение $r_0$, не оговаривая это специально.

Если в условии \eqref{{uM}q} рассматривать вместо функции $q$ функцию $q+1$, то её свойство из \eqref{{uM}q} 
не изменится,  условие-не\-р\-а\-в\-е\-н\-с\-т\-во \eqref{{u<M}u} сохранится, а на  заключение  \eqref{rRuM} это не повлияет. Поэтому
можем считать,  что $q\geq 1$ 
 на $\RR$. Рассмотрим регулярную для задачи Дирихле \cite[теорема 2.11]{HK} область 
\begin{equation}\label{Oq}
D:=D_{q}:=\bigl\{z\in \CC\colon -q(\Im z)< \Re z< q(\Im z) \bigr\}.
\end{equation}
 Для  $b\in \RR^+$ используем обозначение
\begin{equation}\label{strip}
\strip_b:=\bigl\{ z\in \CC \colon |\Im z|<b\bigr\},\quad
\overline{\strip}_b:=\bigl\{ z\in \CC \colon |\Im z|\leq b\bigr\}
\end{equation}
для соответственно \textit{открытой\/} и {\it замкнутой полос ширины $2b$ 
со средней линией $\RR$.} По предельному условию из \eqref{{uM}q} при достаточно большом $b>0$ часть  $D\setminus \overline\strip_{b}$ области $D$  состоит из двух односвязных областей, содержащихся внутри некоторой пары открытых вертикальных углов раствора $<\pi$ вида 
\begin{equation}\label{angle}
\angle (\alpha, \pi-\alpha):=\{z\in \CC\colon \alpha<\arg z<\pi-\alpha\}, \; \angle (-\pi+\alpha, -\alpha) \text{ с $\alpha \in (0,\pi/2)$.}   
\end{equation}
Можно произвести классическое выметание функции $M\in \sbh_*$ из области $D$ в два этапа. Сначала произведём выметание из этой пары односвязных  областей и выметенная функция останется субгармонической функцией конечного типа, поскольку она не превышает классического выметания рода $0$ функции $M$ из пары вертикальных углов \eqref{angle} \cite[6.2, теоремы 7, 8]{KhI}. На втором этапе осуществим классическое выметание из $D$ логарифмического потенциала рода $0$ появившейся меры  с компактным носителем в замыкании $D\cap \overline\strip_b$
\cite[гл. IV, \S~1]{L}, \cite[теорема 2.5.3.1]{Azarin} что может увеличить рост этого логарифмического потенциала лишь на величину  порядка $O\bigl(\ln |z|\bigr)$, $z\to \infty$. Таким образом, такая конструкция даёт  \textit{субгармоническую функцию}
\begin{equation}\label{Uq}
M^{D}=\begin{cases}
M\quad \textit{на дополнении\/ $\CC\setminus D$},\\
\textit{гармоническое продолжение $M$ внутрь\/  $D$ на $D$}
\end{cases}
\end{equation}
\textit{конечного типа,\/} т.\,е. с $\type[M^D]<+\infty$ 
При этом по принципу максимума 
\begin{equation*}
{\mathsf C}_M\bigl(iy, q(y)\bigr)\leq M^D(iy)\quad\text{\it для всех\/ $y\in \RR$},
\quad M\leq M^D \text{\it  на }\CC,
\end{equation*}
а из условий-неравенств \eqref{{u<M}u} следует
\begin{equation}\label{uMD}
u(iy)+u(-iy)\leq M^D(iy)+M^D(-iy)+q_0(y)+q_0(-y)\text{ \it для всех\/ $y\in \RR\setminus E$.}
\end{equation}
Для меры $\nu\in \Meas^+$ \textit{сужение меры\/ $\nu$ на\/} $S\subset \CC$ обозначаем как $\nu\bigm|_S$.

Мера Рисса $\frac{1}{2\pi}\bigtriangleup\! M^D$ функции $M^D$ --- это сумма её сужений
\begin{equation}\label{mud}
\mu_{\infty}:=\frac{1}{2\pi}\bigtriangleup\! M^D\bigm|_{\CC\setminus \clos D}=
\mu\bigm|_{\CC\setminus \clos D}\leq \mu, \quad 
\mu_0:=\frac{1}{2\pi}\bigtriangleup\! M^D\bigm|_{\partial D}.
\end{equation} 
 Интегрирование неравенства \eqref{uMD} с множителем $\frac{1}{2\pi}$ в обозначении 
\begin{equation}\label{ErR}
E_r^R:=E^R\setminus E^r=E\cap (r,R]
\end{equation} 
даёт при всех значениях $r_0\leq r<R<+\infty$ неравенства
\begin{multline}\label{JuMD}
J_{i\RR}(r,R;u)\leq J_{i\RR}(r,R; M^D)\\
+\frac{1}{2\pi}\int_{E_r^R}\frac{u(iy)+u(-iy)-M^D(iy)-M^D(-iy)}{y^2}\dd y
+J_{\RR}(r,R;q_0).
\end{multline}
\begin{lemma}\label{lE} Пусть $r_0>0$. Для любой функции $u$ из \eqref{{uM}u} 
существует такое число $c_u\in \RR^+$, что для любого $\mes$-измеримого подмножества $E\subset \RR^+$ в обозначениях  \eqref{{u<M}E} и \eqref{ErR}  имеет место неравенство
\begin{equation}\label{Ju}
\int_{E_r^R}\frac{|u|(x)}{x^2}\dd x\leq c_u\int_r^R\frac{q_E(t)}{t^2}\dd t+c_u\quad\text{для всех $r_0\leq r<R<+\infty$,} 
\end{equation} 
где функция $q_E$ возрастающая и $q_E(r)\leq r$ при $r\in \RR_*^+$. 
\end{lemma}
\begin{proof}
Функция 
$$
(x,y)\longmapsto x\ln\frac{ey}{x}, \quad (x,y)\in (0,y] \times \RR^+,
$$ 
доопределённая нулём при $x=0$, возрастает по переменной $y\in \RR_*^+$, а также по  $x\in (0,y]$, достигая наибольшего значения при $x=y$, что даёт свойства функции $q_E$.  Далее $\boldsymbol{1}_E$ --- \textit{характеристическая  функция подмножества $E$.\/} 

Согласно \cite[теорема 8]{GrM} существуют постоянные $c',c\in \RR^+$, 
для которых  
\begin{equation}\label{Grint}
\int_{r_0}^x\boldsymbol{1}_{E}(t) |u|(t)\dd t\leq c'x\mes (E^x)\ln\frac{4x}{\mes (E^x)}
\overset{\eqref{{u<M}E}}{\leq} cq_E(x)x
\end{equation}
для всех $x\geq r_0$. Для левой части из \eqref{Ju} имеем 
\begin{multline*}
\int_{E_r^R}\frac{|u|(x)}{x^2}\dd x=
\int_r^R \boldsymbol{1}_E(x)\frac{|u|(x)}{x^2}\dd x=
\int_r^R\frac{1}{x^2}\dd \int_r^x \boldsymbol{1}_E(t)|u|(t)\dd t \dd x\\
=\frac{1}{R^2}\int_r^R  \boldsymbol{1}_E(t)|u|(t)\dd t+\int_r^R\int_r^x 
\boldsymbol{1}_E(t) |u|(t)\dd t \dd\,\Bigl(-\frac{1}{x^2}\Bigr)\\
\overset{\eqref{Grint}}{\leq} c\frac{q_E(R)}{R}+2c\int_r^R \frac{q_E(x)}{x^2}\dd x
\leq c+2c\int_r^R \frac{q_E(x)}{x^2}\dd x,
\end{multline*}
поскольку справедливо неравенство $q_E(R)\leq R$. Лемма \ref{lE} доказана. 
\end{proof}

Четырежды применяя лемму \ref{lE} к интегралу по множеству $E_r^R$
из правой части \eqref{JuMD}, для некоторого числа $c_1\in \RR^+$  
для всех $r_0\leq r<R<+\infty$ получаем
\begin{equation}\label{JDq}
J_{i\RR}(r,R;u)\leq J_{i\RR}(r,R; M^D)
+c_1J_{\RR}(r,R;q_E+q_0)+c_1.
\end{equation}

\begin{lemma}[{\cite[предложение 4.1, (4.19)]{KhII}\footnote{В формуле  (4.19) из  \cite{KhI}, к сожалению, был пропущен множитель $\frac{p}{2\pi}$ перед 
интегралом $J^{[p]}_{\alpha,\beta}(r,R;v)$, что, впрочем, не повлияло на результаты в \cite{KhI}. В настоящей статье соответствующий случаю $p:=1$, $\alpha :=-\pi/2$, $\beta:=\pi/2$ множитель $\frac{1}{2\pi}$ заранее прописан  в определении интеграла \eqref{fK:abp+} и  формула \eqref{Jl} корректна.}}]\label{lemJl} Пусть $r_0>0$. 
Для любой функции $u$ из \eqref{{uM}u} существует $c_u\in \RR^+$, для которого 
при всех $r_0\leq r<R<+\infty$
\begin{equation}\label{Jl}
\max \Bigl\{\bigl|J_{i\RR}(r,R;u)-l_{\nu}^{\rh}(r,R)\bigr|,
\bigl|J_{i\RR}(r,R;u)-l_{\nu}^{\lh}(r,R)\bigr|\Bigr\}
\leq c_u.
\end{equation}
\end{lemma}
Дважды применяя лемму \ref{lemJl} к функциям $u$ и $M^D$ 
в \eqref{JDq}, для некоторого числа $c_2\in \RR^+$ при всех $r_0\leq r<R<+\infty$
имеем неравенство
\begin{multline}\label{JDq+}
l_{\nu}^{\rh}(r,R)
\overset{\eqref{mud}}{\leq} l_{\mu_{\infty}+\mu_0}^{\rh}(r,R)
+c_1J_{\RR}(r,R;q_E+q)+c_2\\
\overset{\eqref{mud}}{\leq}
 l_{\mu}^{\rh}(r,R)+ l_{\mu_0}^{\rh}(r,R)+c_1J_{\RR}(r,R;q_E+q_0)+c_2.
\end{multline}
Пусть  выбраны углы и зафиксировано число $b>0$  как в \eqref{Oq}--\eqref{angle}. Сужение  меры $\mu_0$ на замкнутую
 полосу $\overline\strip_b$ --- мера с компактным носителем и  для этого сужения  логарифмические меры и субмера  интервалов из \eqref{df:lm}  равномерно ограничены при всех $r_0\leq r<R<+\infty$. Следовательно, не умаляя общности, можем считать, что носитель меры $\mu_0$ содержится в паре углов \eqref{angle} и не пересекается с открытой полосой $\strip_b$. Обозначим  $\mu_0$-меру замкнутой полосы $\overline\strip_y$ ширины $2y\in \RR^+$ через 
\begin{equation}\label{mui}
\mu_0^i(y):=\mu_0(\overline\strip_y ) ,  \text{ где }
\mu_0^i(y)\leq Cy \quad \text{при всех }y\in \RR^+
\end{equation}
 для некоторой постоянной $C$, не зависящей от $y$, а также $\mu_0^i(y)=0$ при $y\in [0,b)$.

\begin{lemma}\label{lemD}
Пусть  $r_0>0$ и  $\mu_0\in \Meas^+$ --- мера  с носителем $\supp \mu_0$ в замыкании  $D\overset{\eqref{Oq}}{:=} D_{q}$, где функция $q$ из \eqref{{uM}q},  с  функцией $\mu_0^i\colon \RR^+\to \RR^+$  из  \eqref{mui}. 
Тогда для некоторого числа $C_0\in \RR^+$ 
\begin{equation}\label{lD}
l_{\mu_0}^{\rh}(r,R)\leq
\int_r^R  \frac{Q(y)}{y^2} \dd \mu_0^i(y)
+C_0
\quad \text{при всех $r_0\leq r<R<+\infty$,}
\end{equation}
где 
\begin{equation}\label{Q}
Q(y):=q(y)+q(-y), \quad y\in \RR^+.  
\end{equation}
\end{lemma}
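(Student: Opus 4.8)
The plan is to bound the integrand of $l_{\mu_0}^{\rh}(r,R)$ pointwise on $\supp\mu_0$ and then to rewrite the resulting integral over $|z|$-annuli as the Stieltjes integral over $|\Im z|$-annuli appearing on the right of \eqref{lD}. For $z=x+iy\in\clos D$ with $x=\Re z>0$ one has $0<x\leq q(y)$ by continuity of $q$, while $|z|^2=x^2+y^2\geq y^2$; together with $q(y)\leq q(|y|)+q(-|y|)=Q(|y|)$ and the fact that $\supp\mu_0\subset\{|\Im z|\geq b\}$ by \eqref{mui} (so that $y\neq0$) this gives the pointwise bound
\[
\Re\frac{1}{z}=\frac{x}{|z|^2}\leq\frac{Q(|y|)}{y^2},\qquad y=\Im z .
\]
Since $\mu_0^i(y)=\mu_0(\overline{\strip}_y)$ is exactly the distribution function of $|\Im z|$ under $\mu_0$, the convention \eqref{LS} yields
\[
\int_{\{r<|\Im z|\leq R\}}\frac{Q(|\Im z|)}{(\Im z)^2}\dd\mu_0(z)=\int_r^R\frac{Q(y)}{y^2}\dd\mu_0^i(y),
\]
so it remains only to compare the domain $\{r<|z|\leq R,\ \Re z>0\}$ of $l_{\mu_0}^{\rh}$ with the domain $\{r<|\Im z|\leq R\}$.

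The geometric input is the cone shape of $D$. With $\lambda:=\limsup_{|y|\to+\infty}q(y)/|y|<1$ from \eqref{{uM}q}, fix $\lambda'\in(\lambda,1)$ and $Y_0\geq b$ with $q(y)\leq\lambda'|y|$ for all $|y|\geq Y_0$; then on $\supp\mu_0\cap\{|\Im z|\geq Y_0\}$ one has $|z|^2\leq(1+(\lambda')^2)y^2$, i.e.\ $|y|\leq|z|\leq c|y|$ with $c:=\sqrt{1+(\lambda')^2}>1$. I would split $l_{\mu_0}^{\rh}(r,R)$ along $\{|\Im z|<Y_0\}$ and $\{|\Im z|\geq Y_0\}$. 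On the first piece $\supp\mu_0$ lies in the bounded disc $\overline{D}(0,R_0)$ with $R_0:=\bigl(\max_{b\leq|s|\leq Y_0}q(s)^2+Y_0^2\bigr)^{1/2}$, the integrand is $\leq1/|z|\leq1/r_0$, and the mass is $\leq\mu_0^i(Y_0)\leq CY_0$ by \eqref{mui}, so this piece contributes at most $CY_0/r_0$.

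On the second piece the cone bound gives the inclusion $\{r<|z|\leq R\}\subset\{r/c<|\Im z|\leq R\}$, so after the pointwise estimate this contribution is at most $\int_{\{r/c<|\Im z|\leq R,\ |\Im z|\geq Y_0\}}\frac{Q(|\Im z|)}{(\Im z)^2}\dd\mu_0$. Splitting $\{r/c<|\Im z|\leq R\}$ into $\{r<|\Im z|\leq R\}$ and the sliver $\{r/c<|\Im z|\leq r\}$, the first part is dominated by $\int_r^R\frac{Q(y)}{y^2}\dd\mu_0^i(y)$, and everything reduces to showing the sliver contributes only a constant. This last estimate is the crux, and it is precisely where the linear mass bound \eqref{mui} enters: on the sliver one has $Q(|\Im z|)\leq2\lambda'|\Im z|$ and $|\Im z|>r/c$, so the integrand is $\leq2c\lambda'/r$, whereas the sliver mass is $\leq\mu_0^i(r)\leq Cr$; the product $2c\lambda'C$ is independent of $r$ and $R$. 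Collecting the two bounded contributions gives \eqref{lD} with $C_0:=CY_0/r_0+2c\lambda'C$.
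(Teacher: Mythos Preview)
Your proof is correct and follows essentially the same route as the paper's: the pointwise bound $\Re(1/z)\leq Q(|\Im z|)/(\Im z)^2$ on $\clos D$, the cone inclusion to pass from $|z|$-annuli to $|\Im z|$-annuli, and the sliver estimate via the linear mass bound \eqref{mui}. The paper works directly with the angle $\alpha$ from \eqref{angle} (so that $\sin\alpha$ plays the role of your $1/c$) and restricts to $r\geq b$, whereas you re-derive the cone constant as $c=\sqrt{1+(\lambda')^2}$ and dispose of the bounded piece $\{|\Im z|<Y_0\}$ separately; these are cosmetic differences.
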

\begin{proof} Для  всех $b\leq r<R<+\infty$ имеем
\begin{multline}\label{qC}
l_{\mu_0}^{\rh}(r,R)\overset{\eqref{df:dDlm+}}{=}\int_{\substack{r < | z|\leq R\\ \Re z >0}}  \frac{\Re z}{|z|^2} \dd \mu_0(z)
\overset{\eqref{Oq}}{\leq} 
\int_{\substack{r < | z|\leq R\\ \Re z >0}}  \frac{q(\Im z)}{|\Im z|^2} \dd \mu_0(z) \\
\overset{\eqref{mui}}{\leq} \int_{r\sin \alpha}^{R} 
 \frac{q(y)+q(-y)}{y^2} \dd \mu_0^i(y)
\overset{\eqref{ub}}{\leq} \left(\int_{r\sin \alpha}^{r} +\int_{r}^{R} \right)
 \frac{Q(y)}{y^2} \dd \mu_0^i(y)\\
\overset{\eqref{mui}}{\leq} C\frac{Q(r)}{r\sin^2\alpha }
+\int_{r}^{R}  \frac{Q(y)}{y^2} \dd \mu_0^i(y)
\overset{\eqref{{uM}q}}{\leq} C_0+\int_r^R  \frac{Q(y)}{y^2} \dd \mu_0^i(y),
\end{multline} 
где $C_0\in \RR^+$ не зависит от $b\leq r<R<+\infty$. 
\end{proof}
По лемме \ref{lemD} из неравенства \eqref{JDq+} с постоянной $C_1\in \RR^+$ получаем
\begin{equation}\label{JDq+C}
l_{\nu}^{\rh}(r,R) \overset{\eqref{mud}}{\leq}  l_{\mu}^{\rh}(r,R)+ \int_r^R  \frac{Q(y)}{y^2} \dd m(y) +c_1J_{\RR}(r,R;q_E+q_0)+C_1,
\end{equation}
где положено $m(t):=\mu_0^i(t)$ и по построению для постоянной $C\in \RR^+$
\begin{equation}\label{m}
m(t)
\overset{\eqref{mui}}{\leq} Ct, \quad\text{при всех  $t\in  \RR^+$}, 
\quad \text{$m(t)=0$ при $t\in [0,b)\neq \varnothing$}.
\end{equation}
\begin{lemma}\label{lemm} Пусть возрастающая функция $m\colon \RR^+\to \RR^+$ удовлетворяет условиям \eqref{m}, 
а непрерывная функция 
\begin{equation}\label{QC}
Q\colon \RR^+\to \RR^+ \quad\text{такова, что } Q(t)=O(t) \text{ при } t\to +\infty.
\end{equation}  
Тогда для любого числа $N\in \RR^+$ найдётся число $C_2\in \RR^+$, для которого 
\begin{equation}\label{Qm}
 \int_r^R  \frac{Q(t)}{t^2} \dd m(t)\leq C_2\int_r^R t^N\sup_{s\geq t} \frac{Q(s)}{s^{2+N}}\dd t+C_2
\end{equation}
при всех $ b\leq  r < R <+\infty$, 
\end{lemma}
\begin{proof}  Для интеграла из левой части \eqref{Qm} имеем
\begin{equation*}
I:=\int_r^R  \frac{Q(t)}{t^2} \dd m(t)= 
\int_r^R  \frac{Q(t)}{t^2t^N} \dd \int_r^t s^N m(s) \leq
\int_r^R  \sup_{s\geq t}\frac{Q(s)}{s^{2+N}} \dd \int_r^t s^N m(s).  
\end{equation*}
Для  подынтегрального выражения в последнем интеграле, являющегося  
\textit{убывающей\/} функцией, ввиду \eqref{QC} существует число $C_3$, для которого
\begin{equation}\label{T}
T_N(t):=\sup_{s\geq t}\frac{Q(s)}{s^{2+N}}\leq C_3 \sup_{s\geq t}\frac{s}{s^{2+N}}
=C_3t^{-N-1} \text{ при всех $t\in [b,+\infty)$}.
\end{equation}
Интегрируя этот последний интеграл  по частям, получаем 
\begin{equation}\label{I1}
I\leq T_N(R) \int_r^R s^N \dd m(s) +\int_r^R \int_r^ts^N\dd m(s) \dd  \left(-T_N(t)\right).
\end{equation}  
С учётом  \eqref{m} оцениваем интеграл
\begin{equation*}
\int_r^ts^N\dd m(s)\leq m(t)t^N\leq Ct^{N+1},
\end{equation*}
что для правой части \eqref{I1} с учётом   \eqref{T} даёт
\begin{equation*}\label{I2}
I\leq CC_3+C\int_r^R t^{N+1} \dd  
\left(-T_N(t)\right)\leq CC_3+CC_3+C(N+1)\int_r^RT_N(t)t^N\dd t,
\end{equation*}
откуда  при $C_2:=\max\{2CC_3,C(N+1)\}$ получаем в точности \eqref{Qm}.
\end{proof}
Из  \eqref{JDq+C}  по неравенству  \eqref{Qm} леммы \ref{lemm} 
для некоторой постоянной $C_4\in \RR^+$ 
\begin{equation}\label{JDqm}
l_{\nu}^{\rh}(r,R) \overset{\eqref{mud}}{\leq}  l_{\mu}^{\rh}(r,R)+ C_4
\int_r^R t^N\sup_{s\geq t} \frac{Q(s)}{s^{2+N}}\dd t
+c_1J_{\RR}(r,R;q_E+q_0)+C_1
\end{equation}
при всех $+\infty>R>r\geq \max\{r_0,b\}$, где величину $\max\{r_0,b\}$ можно заменить на $r_0$, увеличивая при необходимости постоянную $C_1$, а $Q$ --- функция из \eqref{Q}. 
\begin{lemma}\label{lemlJ}
Пусть $r_0>0$. Для любой функции $u$ из \eqref{{uM}u} существует $C_u\in \RR^+$, для которого при всех $r_0\leq r<R<+\infty$ имеем неравенство 
\begin{equation*}
\max\bigl\{l_{\nu}(r,R), J_{i\RR}(r,R;u)\bigr\} \leq
\min\bigl\{l_{\nu}^{\rh}(r,R), l_{\nu}^{\lh}(r,R), J_{i\RR}(r,R;u)\bigr\}+C_u.
\end{equation*}
\end{lemma}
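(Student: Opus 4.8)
The plan is to recognize the asserted inequality as nothing more than a uniform bound on the \emph{oscillation} of the three quantities
$l_{\nu}^{\rh}(r,R)$, $l_{\nu}^{\lh}(r,R)$, and $J_{i\RR}(r,R;u)$, and then to read that bound off directly from Lemma \ref{lemJl}, in which all of the analytic work has already been done.

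First I would unfold the bracket definitions. By \eqref{df:dDlLm} one has $l_{\nu}(r,R)=\max\bigl\{l_{\nu}^{\lh}(r,R),l_{\nu}^{\rh}(r,R)\bigr\}$, so the left-hand side of the claimed inequality equals $\max\bigl\{l_{\nu}^{\lh}(r,R),l_{\nu}^{\rh}(r,R),J_{i\RR}(r,R;u)\bigr\}$, while the right-hand side is $\min\bigl\{l_{\nu}^{\rh}(r,R),l_{\nu}^{\lh}(r,R),J_{i\RR}(r,R;u)\bigr\}+C_u$. Writing $A:=l_{\nu}^{\rh}(r,R)$, $B:=l_{\nu}^{\lh}(r,R)$, and $C:=J_{i\RR}(r,R;u)$, the statement becomes
\[
\max\{A,B,C\}-\min\{A,B,C\}\leq C_u
\]
uniformly for $r_0\leq r<R<+\infty$; note that $C$ occurs as a common member of both the outer maximum and the inner minimum.

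Next I would apply Lemma \ref{lemJl} to the same $u$, obtaining a constant $c_u\in\RR^+$ with $|C-A|\leq c_u$ and $|C-B|\leq c_u$ for all $r_0\leq r<R<+\infty$. Hence $A,B,C$ all lie in the interval $[C-c_u,\,C+c_u]$, so $\max\{A,B,C\}\leq C+c_u$ and $\min\{A,B,C\}\geq C-c_u$; subtracting yields $\max-\min\leq 2c_u$. Setting $C_u:=2c_u$ completes the argument.

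I do not expect a genuine obstacle here: the present lemma is a purely combinatorial repackaging of Lemma \ref{lemJl} via the triangle inequality, and every bit of substance already resides in that earlier estimate (ultimately the cited bound from \cite{KhII}). The only point deserving a moment of care is the correct reading of the max/min brackets through \eqref{df:dDlLm}, to confirm that $J_{i\RR}(r,R;u)$ genuinely appears on both sides, which is exactly what makes the oscillation bound equivalent to the two one-sided estimates supplied by Lemma \ref{lemJl}.
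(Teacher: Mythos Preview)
Your argument is correct and matches the paper's own proof, which simply states that Lemma \ref{lemlJ} follows at once from Lemma \ref{lemJl}. Your unfolding of the $\max$/$\min$ via \eqref{df:dDlLm} and the resulting oscillation bound $\max\{A,B,C\}-\min\{A,B,C\}\leq 2c_u$ is exactly the intended derivation.
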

Доказательство леммы \ref{lemlJ} сразу следует из леммы   \ref{lemJl}.

Применяя  лемму \ref{lemlJ} к функциям $u$ и $M$ с мерами Рисса соответственно  $\nu$ и $\mu$,  из \eqref{JDqm} получаем \eqref{rRuM}, что завершает доказательство теоремы \ref{prth1}.

\section{Вариации заключения \eqref{rRuM} теоремы \ref{prth1}}\label{thvar} 
\setcounter{equation}{0} 

\subsection{Некоторые упрощения} Несколько громоздко  выглядящие интегралы  
\eqref{IN}, участвующие в правой части заключительной оценки \eqref{rRuM} теоремы \ref{prth1}, при очень незначительных дополнительных ограничениях на функцию 
\begin{equation}\label{ub}
Q_N(s):= \frac{Q(s)}{s^{2+N}}, \quad\text{где  $Q(s)\overset{\eqref{Q}}{:=}q(s)+q(-s)$,} 
\quad s \in \RR^+,
\end{equation} 
можно включить в наш стандартный интеграл $J_{\RR}$ вида \eqref{fK}, 
уже участвующий в правой части оценки \eqref{rRuM}. 

\begin{proposition}\label{prQN}
Пусть выполнено одно из следующих двух условий:
\begin{enumerate}[{\rm (i)}]
\item\label{Qi} функция $Q_N$ из \eqref{ub} убывающая на некотором интервале  $(A,+\infty)\neq \varnothing$; 
\item\label{Qii} функция $Q$  из \eqref{Q} непрерывно  дифференцируемая  на некотором интервале  $(A,+\infty)\neq \varnothing$  со свойством  
\begin{equation}\label{qCg}
\limsup_{y\to +\infty} \frac{yQ'(y)}{Q(y)}<+\infty.
\end{equation}
\end{enumerate}
Тогда найдутся числа $r_0>0$, $N\in \RR^+$ и  $C\in \RR^+$, для  которых 
\begin{equation}\label{Qbnt}
I_N(r,R;q)\overset{\eqref{IN}}{:=} \int_r^R t^N\sup_{s\geq t}Q_N(s)\dd t\leq C\int_r^R\frac{Q(t)}{t^2}\dd t
\overset{\eqref{fK}}{=}CJ_{\RR}(r,R;q).
\end{equation}
В частности, последний интеграл в \eqref{rRuM} можно заменить на $J_{\RR}(r,R; q)$, а  заключительную оценку  \eqref{rRuM} в теореме\/ {\rm \ref{prth1}} можно записать как 
\begin{multline}\label{rRuM-}
\max\bigl\{l_{\nu}(r,R), J_{i\RR}(r,R;u)\bigr\}\leq 
\min\bigl\{l_{\mu}^{\rh}(r,R), l_{\mu}^{\lh}(r,R), J_{i\RR}(r,R;M)\bigr\}\\
+CJ_{\RR} (r,R;q_0+q_E+q)+C
\quad\text{при всех $r_0\leq r<R<+\infty$,}
\end{multline}
\end{proposition}
\begin{proof}
 Если  функция   \eqref{ub} убывающая на $(A,+\infty)$, то, очевидно,
\begin{equation*}
 t^N\sup_{s\geq t}Q_N(s)\overset{\eqref{ub}}{=}t^N\frac{Q(t)}{t^{2+N}}=\frac{Q(t)}{t^2} 
\end{equation*}
при всех $t>A$ и при выборе $r_0>A$ получаем в точности \eqref{Qbnt}. 

Для непрерывно дифференцируемой функции $Q$, удовлетворяющей условию  \eqref{qCg}, найдётся постоянная $C_4$, с которой 
$sQ'(s)\leq C_5Q(s)$ при всех $s\geq b$. Производная функции из  \eqref{ub}
\begin{equation*}
Q_N'(s)=\frac{Q'(s)s-(2+N)Q(s)}{s^{3+N}}\leq \frac{C_5Q(s)-(2+N)Q(s)}{s^{3+N}},
\end{equation*}
 при выборе $N\geq C_5-2$ отрицательна и функция $Q_N$ из \eqref{ub} убывающая.
\end{proof}

Из предложения \ref{prQN} сразу получаем следующее очевидное 
\begin{corollary}\label{corJ}
Если в условиях  \eqref{Qi} или \eqref{Qii} предложения\/ {\rm \ref{prQN}} для  некоторого числа  $r_0>0$ имеет место соотношение 
\begin{equation*}
\sup_{r_0\leq r<R<+\infty}J_{\RR} (r,R; q_0+q_E+q)<+\infty, 
\end{equation*}
то заключительную оценку  \eqref{rRuM} в теореме\/ {\rm \ref{prth1}} можно записать как 
\begin{equation*}
\max\bigl\{l_{\nu}(r,R), J_{i\RR}(r,R;u)\bigr\}\overset{\eqref{rRuM-}}{\leq} 
\min\bigl\{l_{\mu}^{\rh}(r,R), l_{\mu}^{\lh}(r,R), J_{i\RR}(r,R;M)\bigr\}
+C
\end{equation*}
при всех $r_0\leq r<R<+\infty$.
\end{corollary}
При известной асимптотике функций $q_0$, $q_E$, $q$ при приближении к 
$\pm\infty$ также можно упростить  заключительную оценку \eqref{rRuM} теоремы \ref{prth1}. Вариант ---
\begin{proposition}\label{pr2} Пусть для функции $p\colon \RR^+\to \RR^+$, ограниченной и интегрируемой по Риману на каждом ограниченном интервале $I\subset \RR^+$,
\begin{equation}\label{P0}
\limsup_{t\to +\infty} \frac{P(t)}{t}=0.
\end{equation}
Тогда для любого числа  $r_0>0$ найдётся убывающая функция $d\colon \RR^+\to \RR^+$, для которой 
\begin{equation}\label{d}
\lim_{R\to +\infty} d(R)=0 ,\quad
 \int_r^R\frac{P(t)}{t^2}\dd t\leq  d(R)\ln \frac{R}{r} \quad 
\text{при всех $r_0\leq r<R<+\infty$.}
\end{equation}
В частности, если для функций $q_0,q,q_E$ из  \eqref{{uM}0}, \eqref{{uM}q}, \eqref{{u<M}E} выполнены условия
\begin{equation*}
\limsup_{|y|\to +\infty}\frac{q_0(y)+q(y)+q_E(y)}{|y|}=0, \quad \sup_{[-R,R]}q_0<+\infty
\quad\text{ для любого $R\in\RR^+$},
\end{equation*}
а также функция $q_0$ локально интегрируема по Риману, то заключительную оценку  \eqref{rRuM} в теореме\/ {\rm \ref{prth1}} можно записать как 
\begin{multline*}
\max\bigl\{l_{\nu}(r,R), J_{i\RR}(r,R;u)\bigr\}\overset{\eqref{rRuM-}}{\leq} 
\min\bigl\{l_{\mu}^{\rh}(r,R), l_{\mu}^{\lh}(r,R), J_{i\RR}(r,R;M)\bigr\}\\
+d(R)\ln \frac{R}{r}+C\quad\text{при всех $r_0\leq r<R<+\infty$.},
\end{multline*}
где $d$ --- убывающая функция, для которой $d(R)=o(1)$ при $R\to +\infty$.
\end{proposition}
\begin{proof} Сначала перейдём к {\it убывающей\/} ввиду  \eqref{P0} функции 
\begin{equation}\label{pP}
p(t):=\sup_{s\geq t}\frac{P(s)}{s}\geq \frac{P(t)}{t}, \; t\in \RR^+ ;
\quad \int_r^R\frac{p(t)}{t} \dd t \geq  \int_r^R\frac{P(t)}{t^2} \dd t .
\end{equation}
При каждом фиксированном числе $R>0$ положим 
\begin{multline}\label{dp}
d(R):=\sup_{r_0\leq r<R} \frac{1}{\ln (R/r)}\int_r^R\frac{p(t)}{t} \dd t
\geq \frac{1}{\ln (R/r)}\int_r^R\frac{p(t)}{t} \dd t\\
\overset{\eqref{pP}}{\geq}  \frac{1}{\ln (R/r)}\int_r^R\frac{P(t)}{t^2} \dd t
\quad\text{при всех значениях $r_0\leq r<R<+\infty$}, 
\end{multline}
что даёт последнее соотношение-неравенство из \eqref{d}.  

Точная верхняя грань в \eqref{dp} берётся от функции с частными производными
\begin{equation*}
\begin{split}
\frac{\partial}{\partial r}\frac{1}{\ln (R/r)}\int_r^R\frac{p(t)}{t} \dd t=
\frac{1}{r\ln^2(R/r)}\int_r^R\frac{p(t)-p(r)}{t}\dd t\leq 0 \quad\text{ при
$r_0\leq r<R$},\\
\frac{\partial}{\partial R}\frac{1}{\ln (R/r)}\int_r^R\frac{p(t)}{t} \dd t=
\frac{1}{R\ln^2(R/r)}\int_r^R\frac{p(R)-p(t)}{t}\dd t\leq 0 \quad\text{ при
$r_0\leq r<R$},
\end{split}
\end{equation*}
откуда эта функция убывает по $r<R$ и по $R>r$. Следовательно, 
\begin{equation}\label{dR}
d(R)\equiv \frac{1}{\ln (R/{r_0})}\int_{r_0}^R\frac{p(t)}{t} \dd t \quad\text{--- убывающая функция на $[r_0,+\infty)$.}
\end{equation}
Пусть выбрано число $a>0$ и $p(t)\leq a$ при $t\geq R_a$. Тогда из  \eqref{dR} при $R>R_a\geq r_0$
 \begin{equation*}
d(R)= \frac{1}{\ln (R/{r_0})}\left(\int_{r_0}^{R_a}+\int_{R_a}^R\right)\frac{p(t)}{t} \dd t 
\leq \frac{\sup_{[r_0,R_a]}p \ln(R_a/r_0)}{\ln (R/{r_0})}
+a\ln\frac{R}{R_a}ю
\end{equation*}
Отсюда  $\limsup_{R\to +\infty} d(R)\leq a$, что в силу произвола в выборе 
числа $a>0$ даёт соотношение $d(R)=o(1)$ при $R\to +\infty$.
\end{proof}

\subsection{Логарифмические меры и субмеры интервалов}

Понятиям логарифмических мер интервалов \eqref{df:dDlm+}, \eqref{df:dDlm-} и логарифмической субмеры интервалов  
\eqref{df:dDlLm}  из определения \ref{logD} можно придать и иную форму. 

\begin{definition}
Пусть $\mu\in \Meas^+$.  Введем в рассмотрение \textit{считающую  функцию меры\/  $\mu$ с $2\pi$-периодической борелевской  функцией-весом $k\colon \RR \to \RR^+$\/}:
 \begin{equation}\label{c)}
\mu(r;k):=\int_{\overline D(r)} k (\arg z)\dd \mu(z), 
\end{equation}
При $k\equiv 1$, очевидно, $\mu(r;1)\overset{\eqref{murad}}{\equiv} \mu^{\rad}(r)$ для $r\in \RR^+$.
\end{definition}

В частных случаях $k=\cos^{\pm}$, т.\,е.
\begin{equation}\label{kcos}
k(\theta):=\cos^+\theta:=\max\{0,\cos \theta\}, \;
 k(\theta):=\cos^-\theta:=\max\{0,-\cos\theta\}, \quad \theta\in \RR,
\end{equation}
 из определений \eqref{df:lm} в обозначениях \eqref{c)}--\eqref{kcos} 
 при 
 $0< r < R <+\infty$  интегрированием по частям получаем 
\begin{subequations}\label{l_mu}
\begin{align}
l_{\mu}^{\rh}(r, R)&\overset{\eqref{df:dDlm+}}{=}
\int_r^R \frac{\dd \mu(t;\cos^+)}{t}
\tag{\ref{l_mu}r}\label{l_mu_m+}\\
&=\frac{\mu (R;\cos^+)}{R}-\frac{\mu (r;\cos^+)}{r}
+\int_r^R\frac{\mu (t;\cos^+)}{t^2} \dd t, 
\notag
\\
l_{\mu}^{\lh}(r, R)&\overset{\eqref{df:dDlm-}}{=}
\int_r^R \frac{1}{t} \dd \mu(t;\cos^-)
\tag{\ref{l_mu}l}\label{l_mu_m-}\\
&=\frac{\mu (R;\cos^-)}{R}-\frac{\mu (r;\cos^-)}{r}
+\int_r^R\frac{\mu (t;\cos^-)}{t^2} \dd t.  
\notag
\end{align}
\end{subequations}
Положим 
\begin{subequations}\label{l_mu-}
\begin{align}
\breve l_{\mu}^{\rh}(r, R)&\overset{\eqref{l_mu_m+}}{:=}\int_r^R\frac{\mu (t;\cos^+)}{t^2} \dd t, \quad 0< r < R < +\infty,
\tag{\ref{l_mu-}r}\label{l_mu_m-+}
\\
\breve l_{\mu}^{\lh}(r, R)&\overset{\eqref{l_mu_m-}}{:=}\int_r^R\frac{\mu (t;\cos^-)}{t^2} \dd t,\quad 0< r < R < +\infty,  
\tag{\ref{l_mu-}l}\label{l_mu_m--}
\\
\breve l_{\mu}(r, R)&\overset{\eqref{df:dDlLm}}{:=}\max \{ \breve l_{\mu}^{\lh}(r, R), \breve l_{\mu}^{\rh}(r, R)\}, \quad 0< r < R < +\infty .
\tag{\ref{l_mu-}m}\label{ml}
\end{align}
\end{subequations}

\begin{proposition}\label{pr:lm}
Пусть $\mu\in \Meas^+$  --- мера  конечной верхней плотности,
или конечного типа ${\type}[\mu]\overset{\eqref{murad}}{<}+\infty$,  число $r_0>0$. Тогда 
\begin{equation}\label{ml+-}
\begin{cases}
\bigl|\breve l_{\mu}^{\rh}(r, R)-l_{\mu}^{\rh}(r, R)\bigr|=O(1)\\
\bigl|\breve l_{\mu}^{\lh}(r, R)-l_{\mu}^{\lh}(r, R)\bigr|=O(1) \\
\bigl|\breve l_{\mu}(r,R) - l_{\mu}(r,R)\bigr |=O(1)
\end{cases}
\text{для всех  $r_0\leq r<R<+\infty$.}
\end{equation}
Кроме того, для  любых фиксированных чисел  $a\in (0,1]$, $b\in [1,+\infty)$  
\begin{equation}\label{ml+ab}
\begin{cases}
\bigl| l_{\mu}^{\rh}(r, R)-l_{\mu}^{\rh}(ar,bR)\bigr|=O(1)\\
\bigl| l_{\mu}^{\lh}(r, R)-l_{\mu}^{\lh}(ar, bR)\bigr|=O(1) \\
\bigl|l_{\mu}(r,R) - l_{\mu}(ar,bR)\bigr |=O(1)
\end{cases}
\text{для всех  $r_0\leq r<R<+\infty$.}
\end{equation}
\end{proposition}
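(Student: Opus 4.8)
The plan is to reduce all six estimates to a single quantitative consequence of the finite-type hypothesis. Since $\type[\mu]\overset{\eqref{murad}}{<}+\infty$ and the radial counting function $\mu^{\rad}$ is nondecreasing and finite-valued (the latter being forced by finite type, as an infinite value at one radius would propagate upward), there is a constant $K=K(r_0)\in\RR^+$ with $\mu^{\rad}(r)\le Kr$ for every $r\ge r_0$. Indeed $\limsup_{r\to+\infty}\mu^{\rad}(r)/r<+\infty$ furnishes such a bound beyond some $r_1$, while on the compact range $[r_0,r_1]$ one has $\mu^{\rad}(r)/r\le\mu^{\rad}(r_1)/r_0$; the larger of the two constants works. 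As $0\le\cos^{\pm}\le 1$, this gives $0\le\mu(r;\cos^{+})\le\mu^{\rad}(r)\le Kr$ and likewise for $\cos^{-}$, so the normalized boundary masses $\mu(r;\cos^{\pm})/r$ are bounded on $[r_0,+\infty)$. Every $O(1)$ below is allowed to depend on $r_0$ (and on $a,b$), in accordance with the conventions fixed at the start of Section~2.

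For \eqref{ml+-} I would just read off the boundary terms from the integration-by-parts identities already recorded in \eqref{l_mu_m+}, \eqref{l_mu_m-}: subtracting the definition \eqref{l_mu_m-+} of $\breve l_{\mu}^{\rh}$ from the expanded form \eqref{l_mu_m+} of $l_{\mu}^{\rh}$ leaves exactly
\begin{equation*}
l_{\mu}^{\rh}(r,R)-\breve l_{\mu}^{\rh}(r,R)=\frac{\mu(R;\cos^{+})}{R}-\frac{\mu(r;\cos^{+})}{r},
\end{equation*}
whose right-hand side lies in $[-K,K]$ by the bound above; this is the first line of \eqref{ml+-}, and the second is identical with $\cos^{-}$ in place of $\cos^{+}$. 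The third ($\max$) line then follows from the elementary inequality $\bigl|\max\{a,b\}-\max\{c,d\}\bigr|\le\max\{|a-c|,|b-d|\}$ applied to \eqref{ml} and \eqref{df:dDlLm}.

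For \eqref{ml+ab} I would exploit additivity of $l_{\mu}^{\rh}$ over concentric annuli. Since $a\le 1\le b$, the interval $(ar,bR]$ splits into $(ar,r]\cup(r,R]\cup(R,bR]$, whence
\begin{equation*}
l_{\mu}^{\rh}(ar,bR)-l_{\mu}^{\rh}(r,R)=l_{\mu}^{\rh}(ar,r)+l_{\mu}^{\rh}(R,bR),
\end{equation*}
and each summand is a nonnegative integral of $\Re\tfrac1z=\cos(\arg z)/|z|\le 1/|z|$ against $\mu$ over a thin annulus. Bounding $1/|z|$ by $1/(ar)$ on $ar<|z|\le r$ and by $1/R$ on $R<|z|\le bR$, and the masses by $\mu^{\rad}(r)$ and $\mu^{\rad}(bR)$, gives
\begin{equation*}
0\le l_{\mu}^{\rh}(ar,r)\le\frac{\mu^{\rad}(r)}{ar}\le\frac{K}{a},\qquad 0\le l_{\mu}^{\rh}(R,bR)\le\frac{\mu^{\rad}(bR)}{R}\le Kb,
\end{equation*}
both $O(1)$; the $\lh$ case is symmetric, and the $\max$ line again follows from the same elementary inequality. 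I expect no genuine obstacle: the only delicate point is that the implied constants depend on $r_0$ (and $a,b$), which is precisely why the linear bound $\mu^{\rad}(r)\le Kr$ must be secured on the whole half-line $[r_0,+\infty)$ rather than merely for large $r$.
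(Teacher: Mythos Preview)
Your argument is correct and for \eqref{ml+-} it is identical to the paper's: both read off the boundary terms from the integration-by-parts identities \eqref{l_mu} and bound them by $\mu^{\rad}(r)/r+\mu^{\rad}(R)/R=O(1)$.

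For \eqref{ml+ab} there is a minor route difference worth noting. You split $l_{\mu}^{\rh}(ar,bR)-l_{\mu}^{\rh}(r,R)$ directly into the two thin-annulus contributions and crudely bound each by $\mu^{\rad}/(\text{inner radius})$, obtaining constants $K/a$ and $Kb$. The paper instead passes through the $\breve l$-quantities: it bounds $\bigl|\breve l_{\mu}^{\rh}(ar,bR)-\breve l_{\mu}^{\rh}(r,R)\bigr|$ by $\sup_{ar<t\le r}\frac{\mu^{\rad}(t)}{t}\ln\frac1a+\sup_{R<t\le bR}\frac{\mu^{\rad}(t)}{t}\ln b$ and then invokes \eqref{ml+-} on both ends. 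The paper's detour yields the sharper logarithmic dependence $\ln(1/a)$, $\ln b$ on the dilation parameters, while your direct estimate is shorter and avoids the back-and-forth through $\breve l$; since only $O(1)$ for \emph{fixed} $a,b$ is claimed, either suffices.
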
 
\begin{proof} Соотношения \eqref{ml+-} следуют из   \eqref{l_mu}, так как  для $r_0>0$ и меры $\mu$ конечной верхней плотности
\begin{equation*}
\left|\frac{\mu (R;\cos^{\pm})}{R}-\frac{\mu (r;\cos^{\pm})}{r}\right|
\leq \frac{|\mu|^{\rad}(R)}{R}+\frac{|\mu|^{\rad}(r)}{r}=O(1)
\quad\text{при $r_0\leq r<R<+\infty$.}
\end{equation*}
 Соотношения  \eqref{ml+ab} 
следуют из \eqref{ml+-} и  из 
\begin{equation*}
\bigl|\breve l_\mu^{\rh}(ar,bR)-\breve l_\mu^{\rh}(r,R)\bigr|\leq
\sup_{ar<t\leq r}\frac{|\mu|^{\rad}(t)}{t}\ln\frac{1}{a}+
\sup_{R<t\leq bR}\frac{|\mu|^{\rad}(t)}{t}\ln b, 
\end{equation*}
где в левой части $\breve l^{\rh}$ можно заменить на  $\breve l^{\lh}$.
\end{proof}
\begin{remark}
По предложению\/ {\rm \ref{pr:lm}} согласно соотношениям  \eqref{ml+-}
и замечанию\/ {\rm \ref{remr}}  различные логарифмические меры  и субмеры интервалов\/ \eqref{df:dDlm+}, \eqref{df:dDlm-}, \eqref{df:dDlLm} из определения\/ {\rm \ref{logD}}, начинающиеся с $l$, в заключении  \eqref{rRuM} теоремы\/ {\rm \ref{prth1}} можно заменить на  соответствующие им по \eqref{ml+-} \textit{функции  интервалов\/} \eqref{l_mu_m-+}, \eqref{l_mu_m--}, \eqref{ml},
начинающиеся с  $\breve l$, для которых используем ту же терминологию. Далее эти две эквивалентные с точностью до аддитивной постоянной формы логарифмических мер и субмер интервалов для мер конечной верхней плотности можем не различать и обозначать как в определении\/ {\rm \ref{df:lm}}
без верхнего математического акцента\;  $\breve{}$ над $l$.
\end{remark}

\begin{definition}[{\rm \cite{KKh00}, развитие \cite[определения 3.4, 3.5]{MR}}]\label{drflsm} Пусть $0<r_0\in \RR^+$, $l$ --- функция интервалов $(r,R]\subset r_0+\RR^+$
со значениями в $\RR_{\pm\infty}$, $l(r,R):=l\bigl((r,R]\bigr)$, для которой определим четыре логарифмические блок-плотности:
\begin{subequations}\label{dens}
\begin{align}
\ln\text{\!-}\overline\dens(l)&:=\limsup_{a\to +\infty} \frac{1}{\ln a} \limsup_{r\to+\infty} l(r,ar); 
\tag{\ref{dens}$^-$}\label{{dens}barl}\\
\ln\text{\!-\!}\underline{\dens}(l)&:=\liminf_{a\to +\infty} \frac{1}{\ln a} \limsup_{r\to+\infty} l(r,ar); 
\tag{\ref{dens}$_-$}\label{{dens}l}\\
\ln\text{\!-\!}\dens_{\inf}(l)&:=\inf_{a>1} \frac{1}{\ln a} \limsup_{r\to+\infty} l(r,ar); 
\tag{\ref{dens}i}\label{{dens}infl}\\
\ln\text{\!-\!}\dens_{\rm b}(l)&:=\inf
\left\{b\in \RR^+\colon \sup_{r_0\leq r<R<+\infty}\left(l(r,R)-b\ln\frac{R}{r}\right) <+\infty\right\}.
\tag{\ref{dens}b}\label{{dens}bl}
\end{align}
\end{subequations}
Функцию интервалов $l\geq 0$ называем логарифмической субмерой интервалов (вблизи $+\infty$), если для некоторого числа $r_0>0$ выполнены два условия:
\begin{enumerate}[{\rm [{\bf l}1]}]
\item\label{l1} $\sup_{r\geq r_0} l(r,2r)<+\infty$ {\rm (логарифмический рост)};
\item\label{l2} $l(r_1, r_3)\leq l(r_1, r_2)+l(r_2, r_3)$ для всех\/ $r_0\leq r_1<r_2<r_3$ {\rm (субаддитивность)}. 
\end{enumerate}
Если в первом неравенстве из\/ {\rm [{\bf l}\ref{l2}]} для некоторого числа $r_0>0$ знак $\leq$ можно заменить на знак $=$ для любых\/ $r_0\leq r_1<r_2<r_3$ {\rm (аддитивность),} то $l$ --- логарифмическая мера интервалов (вблизи $+\infty$).
\end{definition}
Из  определения \ref{drflsm} легко следуют
\begin{proposition}
Если $l_1$ и $l_2$ --- логарифмические субмеры интервалов, 
то  $l_1+l_2$ и  $\max\{l_1,l_2 \}$ --- логарифмические  субмеры интервалов.
\end{proposition} 
\begin{proposition}\label{pr5} Для функций $q_0$ из \eqref{{uM}0} при условии 
\begin{equation}\label{q0}
\limsup_{|y|\to +\infty} \frac{q_0(y)}{|y|}<+\infty,
\end{equation} 
а также функций $q$ из \eqref{{uM}q} и $q_E$ из \eqref{{u<M}E} 
 интегралы $J_{\RR}(r,R;q_0)$, $J_{\RR}(r,R;q)$, $J_{\RR}(r,R;q_E)$ --- логарифмические меры интервалов $(r,R]\subset \RR^+$. Если $\mu\in \Meas^+$ --- мера конечной верхней плотности, то  $l_{\mu}^{\rh}$ и $l_{\mu}^{\lh}$ из \eqref{df:dDlm+} и  \eqref{df:dDlm-}, а также $\breve l_{\mu}^{\rh}$ и $\breve l_{\mu}^{\lh}$ из \eqref{l_mu_m-+} и \eqref{l_mu_m--} --- логарифмические меры интервалов,
а $l_{\mu}$ из \eqref{df:dDlLm} и $\breve l_{\mu}$ из \eqref{ml} --- логарифмические субмеры интервалов. 
\end{proposition}

\begin{proposition}[{\rm \cite[теорема 1]{KKh00}}]\label{clb}
Для логарифмической субмеры интервалов $l\geq 0$ все четыре логарифмические блок-плотности из  \eqref{dens} конечны и совпадают, а $\limsup\limits_{a\to +\infty}$ в  \eqref{{dens}barl} и $\liminf\limits_{a\to +\infty}$ в \eqref{{dens}l} можно заменить на  предел $\lim\limits_{a\to +\infty}$. Далее для логарифмической субмеры интервалов $l\geq 0$  все четыре логарифмические блок-плотности из  \eqref{dens} обозначаем единообразно как $\ln\text{\!-\!}\dens (l)$.
\end{proposition} 
В  терминах логарифмической блок-плотности $\ln\text{\!-\!}\dens$ имеет место  

\begin{theorem}\label{th2} Пусть выполнены условия \eqref{uM} и \eqref{u<M} 
теоремы\/ {\rm \ref{prth1},} условие \eqref{q0}, для функции 
$Q(y):=q(y)+q(-y)$ из \eqref{Q},\eqref{ub} 
выполнено одно из условий  \eqref{Qi} или \eqref{Qii}, т.\,е. \eqref{qCg}, предложения\/ {\rm \ref{prQN}}, а также 
\begin{equation}\label{lJ}
\ln\text{\!-\!}\dens(J_{\RR}\bigl(\cdot,\cdot, q_0+q+q_E)\bigr)=0
\end{equation}
 Тогда 
\begin{equation}\label{D}
\ln\text{\!-\!}\dens (l_{\nu})\leq
 \min \bigl\{\ln\text{\!-\!}\dens(l^{\rh}_{\mu}),
\ln\text{\!-\!}\dens(l^{\lh}_{\mu})\bigr\}\leq 
\ln\text{\!-\!}\dens(l_{\mu}).
\end{equation}
\end{theorem}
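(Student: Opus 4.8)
The plan is to derive the density inequality~\eqref{D} from the scalar estimate~\eqref{rRuM-}, which is at our disposal: the hypotheses of Theorem~\ref{th2} contain condition~\eqref{q0} and one of \eqref{Qi},~\eqref{Qii}, so Proposition~\ref{prQN} applies and \eqref{rRuM-} holds for all $r_0\leq r<R<+\infty$. Since $l_{\nu}(r,R)\leq\max\{l_{\nu}(r,R),J_{i\RR}(r,R;u)\}$ and $\min\{l_{\mu}^{\rh}(r,R),l_{\mu}^{\lh}(r,R),J_{i\RR}(r,R;M)\}\leq l_{\mu}^{\rh}(r,R)$, I would first extract the one-sided bound
\begin{equation*}
l_{\nu}(r,R)\leq l_{\mu}^{\rh}(r,R)+C\,J_{\RR}(r,R;q_0+q+q_E)+C,
\end{equation*}
valid for all $r_0\leq r<R<+\infty$, together with its counterpart obtained by replacing $l_{\mu}^{\rh}$ with $l_{\mu}^{\lh}$.

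Before passing to densities I would check that each characteristic appearing here is a maximal semi-additive logarithmic characteristic, so that Proposition~\ref{clb} supplies a genuine logarithmic density as a limit, $\ln\text{\!-\!}\dens(l)=\lim_{a\to+\infty}\frac1{\ln a}\limsup_{r\to+\infty}l(r,ar)$. Indeed $\mu$ and $\nu$ are of finite type by Remark~\ref{remr}, so by Proposition~\ref{pr5} the nonnegative characteristics $l_{\mu}^{\rh}$, $l_{\mu}^{\lh}$ and $l_{\nu}=\max\{l_{\nu}^{\rh},l_{\nu}^{\lh}\}$ are semi-additive; moreover $J_{\RR}(\cdot,\cdot;q_0+q+q_E)=J_{\RR}(\cdot,\cdot;q_0)+J_{\RR}(\cdot,\cdot;q)+J_{\RR}(\cdot,\cdot;q_E)$ is semi-additive as a sum of the three semi-additive characteristics furnished by Proposition~\ref{pr5} under~\eqref{q0}.

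The heart of the argument is the limit passage. Fixing $a>1$, setting $R=ar$ in the first one-sided bound, taking $\limsup_{r\to+\infty}$ and using subadditivity of the upper limit, I would obtain
\begin{equation*}
\limsup_{r\to+\infty}l_{\nu}(r,ar)\leq
\limsup_{r\to+\infty}l_{\mu}^{\rh}(r,ar)
+C\limsup_{r\to+\infty}J_{\RR}(r,ar;q_0+q+q_E)+C .
\end{equation*}
Dividing by $\ln a$ and letting $a\to+\infty$, the right-hand terms converge by Proposition~\ref{clb}: the first to $\ln\text{\!-\!}\dens(l_{\mu}^{\rh})$, the second to $C\cdot\ln\text{\!-\!}\dens\bigl(J_{\RR}(\cdot,\cdot;q_0+q+q_E)\bigr)=0$ by~\eqref{lJ}, and the last to $0$ since $C/\ln a\to0$. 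This gives $\ln\text{\!-\!}\dens(l_{\nu})\leq\ln\text{\!-\!}\dens(l_{\mu}^{\rh})$, and the same computation with $l_{\mu}^{\lh}$ yields $\ln\text{\!-\!}\dens(l_{\nu})\leq\ln\text{\!-\!}\dens(l_{\mu}^{\lh})$; taking the minimum is the first inequality of~\eqref{D}. The second inequality follows from monotonicity of the density, since $l_{\mu}=\max\{l_{\mu}^{\rh},l_{\mu}^{\lh}\}\geq l_{\mu}^{\rh},l_{\mu}^{\lh}$ forces $\ln\text{\!-\!}\dens(l_{\mu})\geq\max\{\ln\text{\!-\!}\dens(l_{\mu}^{\rh}),\ln\text{\!-\!}\dens(l_{\mu}^{\lh})\}\geq\min\{\ln\text{\!-\!}\dens(l_{\mu}^{\rh}),\ln\text{\!-\!}\dens(l_{\mu}^{\lh})\}$.

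The step I expect to require the most care is ensuring that the two error terms on the right of~\eqref{rRuM-} contribute nothing to the density: the additive constant $C$ is annihilated by the normalizing factor $1/\ln a$, while the entire point of hypothesis~\eqref{lJ} is to annihilate the $J_{\RR}$-term. The remaining work is the bookkeeping needed to split the upper limit of a sum legitimately and to pass the limit in $a$ through each summand, which is exactly what the semi-additivity from Proposition~\ref{pr5} and the existence statement of Proposition~\ref{clb} guarantee.
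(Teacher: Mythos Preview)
Your proposal is correct and follows essentially the same route as the paper: extract from \eqref{rRuM-} the pointwise bound $l_{\nu}(r,ar)\leq l_{\mu}^{\rh}(r,ar)+CJ_{\RR}(r,ar;q_0+q+q_E)+C$ (and its $\lh$ counterpart), take $\limsup_{r\to+\infty}$, divide by $\ln a$, let $a\to+\infty$, and use \eqref{lJ} together with Proposition~\ref{clb} to kill the error terms. Your write-up is in fact a bit more explicit than the paper's, since you spell out the appeal to Proposition~\ref{pr5} guaranteeing the semi-additivity needed for Proposition~\ref{clb}.
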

\begin{proof} Заключение  \eqref{rRuM} теоремы \ref{prth1} при дополнительных 
условиях \eqref{Qi} или \eqref{Qii}, т.\,е. \eqref{qCg}, переходит в заключение \eqref{rRuM-} предложения \ref{prQN}, которое, в частности, для любого $a>1$ можно записать как 
\begin{equation*}
l_{\nu}(r,ar)\leq 
\min \bigl\{l_{\mu}^{\rh}(r,ar), l_{\mu}^{\lh}(r,ar)\bigr\} +CJ_{\RR} (r,ar;q_0+q_E+q)+C
\quad\text{при всех $r\geq r_0$.}
\end{equation*}
Отсюда, устремив $r$ к $+\infty$, получаем 
\begin{multline*}
\limsup_{r\to+\infty}l_{\nu}(r,ar)\leq \min \Bigl\{\limsup_{r\to+\infty} l_{\mu}^{\rh}(r,ar), \limsup_{r\to+\infty} l_{\mu}^{\lh}(r,ar)\Bigr\} \\
+C\limsup_{r\to+\infty}CJ_{\RR} (r,ar;q_0+q_E+q)+C,
\end{multline*}
а затем,  поделив обе части на $\ln a$ и устремив $a$ к $+\infty$, 
в обозначениях определения \eqref{{dens}barl} 
имеем 
\begin{multline*}
\ln\text{\!-}\overline\dens(l_{\nu})\leq 
\min \bigl\{\ln\text{\!-}\overline\dens( l_{\mu}^{\rh}), 
\ln\text{\!-}\overline\dens (l_{\mu}^{\lh})\bigr\}+
C\ln\text{\!-}\overline\dens J_{\RR} (\cdot,\cdot;q_0+q_E+q)\\
\overset{\eqref{lJ}}{=}\min \bigl\{\ln\text{\!-}\overline\dens( l_{\mu}^{\rh}), 
\ln\text{\!-}\overline\dens (l_{\mu}^{\lh})\bigr\}
\overset{\eqref{df:dDlLm}}{\leq}  \ln\text{\!-}\overline\dens (l_{\mu}),
\end{multline*}
что по предложению \ref{clb} о совпадении всех четырёх логарифмических блок-плотностей из  \eqref{dens} даёт в точности \eqref{D}.
\end{proof}

Для {\it нулевой ц.ф.э.т.\/} $0$ по определению множество её нулей 
$\Zero_0=\CC$, а считающая мера  
$n_{\Zero_0}(S)\overset{\eqref{nZ}}{=}+\infty$ для каждого $S\subset \CC$.  

Отметим вытекающую из теоремы  \ref{prth1}  
теорему единственности для ц.ф.э.т.: 
\begin{theorem}\label{th3}
Пусть ${\sf Z}=\{{\sf z}_k\}_{k=1,2,\dots}\subset \CC$ --- последовательность комплексных точек конечной верхней плотности в смысле \eqref{murad}, т.\,е.
\begin{equation*}
n_{\sf Z}^{\rad}(r)\overset{\eqref{nZ}}{:=}
n_{\sf Z}\bigl(D(r)\bigr)=O(r) \quad\text{при $r\to +\infty$}, 
\end{equation*} 
с логарифмической субмерой интервалов 
\begin{equation*}
l_{\sf Z}(r,R):=
\max \left\{\sum_{\substack{r < |{\sf z}_k|\leq R\\\Re {\sf z}_k >0}} \Re \frac{1}{{\sf z}_k}, \sum_{\substack{r < |{\sf z}_k|\leq R\\\Re {\sf z}_k <0}} \Re \Bigl(-\frac{1}{{\sf z}_k}\Bigr)\right\}\overset{\eqref{df:lm}}{=}l_{n_{\sf Z}}(r,R).
\end{equation*}
Пусть  выполнены  условия \eqref{{uM}M}, \eqref{{uM}0}, \eqref{{uM}q},  \eqref{{u<M}E}, \eqref{q0}, а  ц.ф.э.т. $f$ обращается в нуль на ${\sf Z}$ в том смысле, что $n_{\Zero_f}\geq n_{\sf Z}$, и для всех $y\in \RR^+\setminus E$
 \begin{equation}\label{fM}
\ln \bigl|f(iy)f(-iy)\bigr|\leq \mathsf{C}_M\bigl(iy, q(y)\bigr) +\mathsf{C}_M\bigl(-iy, q(-y)\bigr)
+q_0(y)+q_0(-y)
\end{equation}
Если выполнено  \eqref{lJ} и при этом 
$\ln\text{\!-\!}\dens(l_{\sf Z})>\ln\text{\!-\!}\dens\bigl(J_{i\RR}(\cdot,\cdot; M)\bigr)$,
то $f=0$.
\end{theorem}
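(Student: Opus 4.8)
The plan is to argue by contradiction: suppose $f\neq 0$ and put $u:=\ln|f|$. Since $f$ is an entire function of exponential type with $f\neq 0$, we have $u\in \sbh_*$, $\type[u]<+\infty$, and its Riesz measure $\nu:=\nu_u=\frac{1}{2\pi}\bigtriangleup u$ is exactly the zero-counting measure $n_{\Zero_f}$, so $\nu\in\Meas^+$ and \eqref{{uM}u} holds. Moreover $u(iy)+u(-iy)=\ln|f(iy)f(-iy)|$, so the hypothesis \eqref{fM} is precisely \eqref{{u<M}u}. Together with the standing assumptions \eqref{{uM}M}, \eqref{{uM}0}, \eqref{{uM}q}, \eqref{{u<M}E} (and \eqref{q0}), all hypotheses of Theorem \ref{prth1} are then in force for the pair $(u,M)$.

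First I would record the comparison between $\sf Z$ and the zeros of $f$. Since $n_{\Zero_f}\geq n_{\sf Z}$ as measures and the integrands $\Re\frac1z=\frac{\Re z}{|z|^2}$ and $\Re(-\frac1z)$ defining $l_\mu^{\rh},l_\mu^{\lh}$ in \eqref{df:lm} are nonnegative on $\{\Re z>0\}$ and $\{\Re z<0\}$ respectively, monotonicity of the integral yields $l_\nu^{\rh}\geq l_{n_{\sf Z}}^{\rh}$ and $l_\nu^{\lh}\geq l_{n_{\sf Z}}^{\lh}$, whence $l_{\sf Z}(r,R)=l_{n_{\sf Z}}(r,R)\leq l_\nu(r,R)$. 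Next I would apply Theorem \ref{prth1} and keep only the $J_{i\RR}(\cdot,\cdot;M)$ entry of the minimum on the right of \eqref{rRuM}, obtaining constants $r_0,N,C$ with
\[
l_{\sf Z}(r,R)\leq l_\nu(r,R)\leq J_{i\RR}(r,R;M)+C\,J_{\RR}(r,R;q_0+q_E)+C\,I_N(r,R;q)+C
\]
for all $r_0\leq r<R<+\infty$.

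The crux is to pass to logarithmic density and to show that the error contributes nothing. By \eqref{q0} and Proposition \ref{pr5}, $J_{\RR}(\cdot,\cdot;q_0)$, $J_{\RR}(\cdot,\cdot;q)$, $J_{\RR}(\cdot,\cdot;q_E)$ are nonnegative logarithmic interval functions; by the finite type of $M$ the same holds for $J_{i\RR}(\cdot,\cdot;M)$ up to a bounded additive term, so all the relevant logarithmic densities are well defined by Proposition \ref{clb}. Condition \eqref{lJ} together with monotonicity of $\ln\text{-}\dens$ forces $\ln\text{-}\dens\bigl(J_{\RR}(\cdot,\cdot;q_0+q_E)\bigr)=0$ and $\ln\text{-}\dens\bigl(J_{\RR}(\cdot,\cdot;q)\bigr)=0$. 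The only delicate term is $I_N(\cdot,\cdot;q)$: here I would invoke Proposition \ref{prQN}, whose regularity hypothesis \eqref{Qi}/\eqref{Qii} on $Q=q(\cdot)+q(-\cdot)$ delivers the pointwise domination $I_N(r,R;q)\leq C\,J_{\RR}(r,R;q)$ (equivalently, one replaces \eqref{rRuM} by the reduced form \eqref{rRuM-}), so that $\ln\text{-}\dens(I_N(\cdot,\cdot;q))\leq C\ln\text{-}\dens(J_{\RR}(\cdot,\cdot;q))=0$. Using subadditivity of $\ln\text{-}\dens$ on sums and that a bounded interval function has zero density, I would conclude $\ln\text{-}\dens(l_\nu)\leq \ln\text{-}\dens\bigl(J_{i\RR}(\cdot,\cdot;M)\bigr)$.

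Finally, combining with $l_{\sf Z}\leq l_\nu$ and monotonicity of $\ln\text{-}\dens$ gives
\[
\ln\text{-}\dens(l_{\sf Z})\leq \ln\text{-}\dens(l_\nu)\leq \ln\text{-}\dens\bigl(J_{i\RR}(\cdot,\cdot;M)\bigr),
\]
contradicting the standing strict inequality $\ln\text{-}\dens(l_{\sf Z})>\ln\text{-}\dens\bigl(J_{i\RR}(\cdot,\cdot;M)\bigr)$. Hence the assumption $f\neq 0$ is untenable and $f=0$. I expect the main obstacle to be exactly the $I_N$ term: showing it has zero logarithmic density genuinely requires the regularity of $Q$ supplied by Proposition \ref{prQN}, since a bare density-zero hypothesis on $J_{\RR}(\cdot,\cdot;q)$ does not by itself control the running supremum inside $I_N$ (isolated tall spikes of $q$ could inflate $I_N$ while leaving $J_{\RR}(\cdot,\cdot;q)$ of density zero). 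Verifying that \eqref{lJ} together with the hypotheses of Proposition \ref{prQN} does suffice is therefore the step I would spell out most carefully.
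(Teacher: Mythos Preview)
Your proposal is correct and follows essentially the same route as the paper: assume $f\neq 0$, set $u=\ln|f|$, invoke Theorem~\ref{prth1} in the reduced form \eqref{rRuM-} of Proposition~\ref{prQN}, keep only the $J_{i\RR}(\cdot,\cdot;M)$ entry of the minimum, and pass to logarithmic density via \eqref{lJ} to annihilate the error terms, obtaining $\ln\text{-}\dens(l_{\sf Z})\leq \ln\text{-}\dens\bigl(J_{i\RR}(\cdot,\cdot;M)\bigr)$, a contradiction. You are in fact slightly more explicit than the paper on two points---you separate $\nu=n_{\Zero_f}$ from $n_{\sf Z}$ and use $n_{\Zero_f}\geq n_{\sf Z}$ to deduce $l_{\sf Z}\leq l_\nu$, and you flag that the regularity hypothesis \eqref{Qi}/\eqref{Qii} of Proposition~\ref{prQN} is what allows $I_N$ to be absorbed into $J_{\RR}(\cdot,\cdot;q)$---both of which the paper handles implicitly in exactly the same way.
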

\begin{proof} Предположим, что $f\neq 0$ и положим $u:=\ln |f|$. 
Тогда имеет место \eqref{{uM}u}  с  $\nu:=n_{\sf Z}$. Из \eqref{fM} по предложению \ref{prQN}  при $a>1$ из \eqref{rRuM-} имеем
\begin{equation*}
l_{\sf Z}(r,ar)=l_{n_{\sf Z}}(r,ar)\leq   J_{i\RR}(r,ar;M)+CJ_{\RR} (r,ar;q_0+q_E+q)+C
\quad\text{при $r\geq r_0$}.
\end{equation*}
Устремляя здесь $r$ к $+\infty$, затем, после деления на $\ln a$, $a$ к $+\infty$,  по предложению \ref{pr5} и  определению \eqref{dens} ввиду \eqref{lJ} получаем 
$\ln\text{\!-\!}\overline\dens(l_{\sf Z})\leq \ln\text{\!-\!}\overline\dens\bigl(J_{i\RR}(\cdot,\cdot; M)\bigr)$,
что по предложению \ref{clb} противоречит условию $\ln\text{\!-\!}\dens(l_{\sf Z})>\ln\text{\!-\!}\dens\bigl(J_{i\RR}(\cdot,\cdot; M)\bigr)$.
\end{proof}

\renewcommand{\refname}{Литература}

\end{document}